\pgfplotsset{compat=1.15}
\newtheorem{theorem}{Theorem}[section]
\newtheorem*{theorem*}{Theorem}
\newaliascnt{corollary}{theorem}
\newtheorem{corollary}[corollary]{Corollary}
\newaliascnt{lemma}{theorem}
\newtheorem{lemma}[lemma]{Lemma}
\newaliascnt{proposition}{theorem}
\newtheorem{proposition}[proposition]{Proposition}
\newaliascnt{problem}{theorem}
\newaliascnt{conjecture}{theorem}
\theoremstyle{remark}
\newaliascnt{remark}{theorem}
\newtheorem{remark}[remark]{Remark}
\theoremstyle{definition}
\newaliascnt{definition}{theorem}
\newaliascnt{definitions}{theorem}
\newaliascnt{notation}{theorem}
\newaliascnt{example}{theorem}
\newtheorem{example}[example]{Example}
\newaliascnt{exercise}{theorem}
\newaliascnt{sit}{theorem}
\crefname{theorem}{Theorem}{Theorems}
\crefname{corollary}{Corollary}{Corollaries}
\crefname{lemma}{Lemma}{Lemmas}
\crefname{proposition}{Proposition}{Propositions}
\crefname{problem}{Problem}{Problems}
\crefname{conjecture}{Conjecture}{Conjectures}
\crefname{claim}{Claim}{Claims}
\crefname{remark}{Remark}{Remarks}
\crefname{definition}{Definition}{Definitions}
\crefname{notation}{Notation}{Notations}
\crefname{example}{Example}{Examples}
\crefname{exercise}{Exercise}{Exercises}
\def\Z{\mathbb{Z}}
\def\M{\mathfrak{m}}
\def\XX{\mathbf{x}}
\newcommand{\KK}[0]{\ensuremath{\mathbf{k}}}
\newcommand{\ZZ}[0]{\ensuremath{\mathbb{Z}}}
\newcommand{\GA}[0]{\ensuremath{\mathbb{G}_{\mathrm{a}}}}
\newcommand{\GM}[0]{\ensuremath{\mathbb{G}_{\mathrm{m}}}}
\newcommand{\spec}[0]{\ensuremath{\operatorname{Spec}}}
\newcommand{\supp}[0]{\ensuremath{\operatorname{supp}}}
\newcommand{\Aut}[0]{\ensuremath{\operatorname{Aut}}}
\newcommand{\GL}[0]{\ensuremath{\operatorname{GL}}}
\newcommand{\Der}[0]{\ensuremath{\operatorname{Der}}}
\newcommand{\quot}[1]{{\overline{#1}}}
\newcommand*\bigcdot{\mathpalette\bigcdot@{.5}} \newcommand*\bigcdot@[2]{\mathbin{\vcenter{\hbox{\scalebox{#2}{$\m@th#1\bullet$}}}}} \makeatother
\begin{document}
\title[Finite-dimensional monomial algebras]{Finite-dimensional monomial algebras are determined by their automorphism group}

\author{Roberto D\'iaz}
\address{Departamento de Matem\'aticas, Facultad de Ciencias, Universidad de La Serena, Juan
Cisternas 1200, La Serena, Chile.}%
\email{roberto.diazv1@userena.cl}

\author{Giancarlo Lucchini Arteche}
\address{Departamento de Matem\'aticas, Facultad de Ciencias, Universidad de Chile, Las Palmeras 3425, \~{N}u\~{n}oa, Santiago, Chile.}%
\email{luco@uchile.cl}


\thanks{
 \Letter\, Giancarlo Lucchini Arteche \texttt{luco@uchile.cl}}

\begin{abstract}
A monomial algebra is the quotient of a polynomial algebra by an ideal generated by monomials. We prove that finite-dimensional monomial algebras are characterized by their automorphism group among finite-dimensional, local algebras with cotangent space of fixed dimension. In particular, we show how to recover a monomial ideal given the automorphism group of the corresponding monomial algebra.\\

\noindent\textsc{2020 MSC codes}: 13F55, 16W20, 16W25; 13H99, 13N15, 52B20.\\
\textsc{Key words}: monomial algebras, monomial ideals, automorphism groups, derivations.
\end{abstract}

\maketitle


\section{Introduction}\label{Introduction}

A monomial algebra is the quotient of the polynomial algebra $\KK[\XX]=\KK[x_1,\ldots,x_n]$ by an ideal $I$ generated by monomials. These algebras have a natural action of the algebraic torus $T=\GM^n$ via $\mathbf{t}\cdot x_i=t_i x_i$, where $\mathbf t=(t_1,\ldots,t_n)$. This allows to study the algebra by means of combinatorial objects naturally associated to the action of a torus, in a very close analogy to the case of toric varieties. These combinatorial objects appear elsewhere in the literature creating links between several research areas. For instance, Stanley-Reisner rings are examples of monomial algebras that are related to abstract simplicial complexes \cite[\textsection 1.1]{MS05}, while discrete Hodge algebras form another family of monomial algebras related to Grassmannian varieties and their Schubert subvarieties \cite[\textsection\textsection 3--4]{DCEP82}.\\

The automorphism group of finite-dimensional monomial algebras was studied recently in \cite{DLMR24}, where it was proved that $\Aut_\KK(\KK[\XX]/I)$ is a linear algebraic group and that its neutral connected component $\Aut^0_\KK(\KK[\XX]/I)$ is generated by the torus $T$ and some generalized root groups associated to the adjoint action of $T$. Since $\KK[\XX]/I$ is finite-dimensional, the Lie algebra of $\Aut_\KK(\KK[\XX]/I)$ actually coincides with the algebra of derivations $\Der(\KK[\XX]/I)$ \cite[II, \textsection 4, 2.3]{DP70}, which is also studied in detail in \cite{DLMR24}. This Lie algebra is naturally related to the Lie algebras $\operatorname{Der}(\KK [\XX])$ of derivations of $\KK[\XX]$ and its subalgebra $\operatorname{Der}_I(\KK [\XX])$ of derivations that preserve the ideal $I$. These had been studied previously by several authors; see, for example, \cite{B95,T09,KLL15}.

In this paper, we use some of these results to prove that the automorphism group (and actually, its neutral connected component) of a finite-dimensional monomial algebra completely determines the algebra itself within a natural and relatively wide family of algebras. The main result is the following:

\begin{theorem}\label{thm main}
Let $\KK$ be a field of characteristic 0 and let $\KK[\XX]=\KK[x_1,\ldots,x_n]$. Let $\KK[\XX]/I$ be a finite-dimensional monomial algebra and let $(B,\M)$ be a finite-dimensional, local $\KK$-algebra with residue field $\KK$. Assume that $\dim_\KK(\M/\M^2)=n$ and that $\Aut^0_\KK(B)$ is isomorphic to $\Aut_\KK^0(\KK[\XX]/I)$ as algebraic $\KK$-groups. Then $B$ is isomorphic to $\KK[\XX]/I$.
\end{theorem}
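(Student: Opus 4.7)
My plan is to use the given isomorphism of automorphism groups to transfer a torus action onto $B$, diagonalize it on the cotangent space to produce generators, realize $B$ as a monomial algebra, and finally identify the specific monomial ideal using the structural description of $\Aut^0$ from \cite{DLMR24}.

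First, by \cite{DLMR24} the standard torus $T=\GM^n$ is a maximal torus of $\Aut^0_\KK(A)$ where $A:=\KK[\XX]/I$, so the hypothesis produces a maximal torus $T' \subset \Aut^0_\KK(B)$ of the same dimension $n$. Each element of $T'$ is a $\KK$-algebra automorphism of $(B,\M)$, hence preserves $\M$ and acts on the cotangent space $\M/\M^2$. The first key step is to show that the induced representation $T' \to \GL(\M/\M^2) \cong \GL_n$ is faithful. If $t \in T'$ acts trivially on $\M/\M^2$, then an easy induction on $k$, using that $t$ is a ring homomorphism, shows that $t$ acts trivially on each quotient $\M^k/\M^{k+1}$; since $\M^d=0$ for some $d$, the operator $t-\mathrm{id}$ is nilpotent on $B$, so $t$ is unipotent. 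But $t$ is semisimple as an element of a torus, hence $t=\mathrm{id}$.

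Because the image of $T'$ in $\GL(\M/\M^2)$ has dimension $n$, it is a maximal torus of $\GL_n$, so after choosing a suitable basis, $T'$ acts diagonally on $\M/\M^2$ with characters $\chi_1, \dots, \chi_n$ forming a $\ZZ$-basis of $X^*(T')$. By complete reducibility of rational $T'$-representations in characteristic zero, I would lift this diagonal basis to $T'$-semi-invariants $y_1, \dots, y_n \in \M$ of weights $\chi_1, \dots, \chi_n$. Nakayama's lemma guarantees that these generate $\M$, and hence all of $B$, as a $\KK$-algebra, so the map $\varphi\colon \KK[\XX]\twoheadrightarrow B$, $x_i\mapsto y_i$, is a $T'$-equivariant surjection once $x_i$ is given weight $\chi_i$. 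Its kernel $J$ is $T'$-stable, and since the $\chi_i$ form a basis of $X^*(T')$, distinct monomials in the $x_i$ carry distinct $T'$-weights; every $T'$-stable ideal of $\KK[\XX]$ is therefore monomial. This already shows $B \cong \KK[\XX]/J$ is a monomial algebra.

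The remaining and principal difficulty is to deduce that $\KK[\XX]/J \cong \KK[\XX]/I$. For this I would invoke the structural description of $\Aut^0$ from \cite{DLMR24}: $\Aut^0_\KK(\KK[\XX]/I)$ is generated by the maximal torus together with root groups indexed by certain monomial derivations $x^a\partial_i$ preserving the ideal. After arranging that the given isomorphism $\Aut^0_\KK(A)\simeq \Aut^0_\KK(B)$ sends $T$ to $T'$, it induces, on Lie algebras, a $T$-equivariant isomorphism $\Der(\KK[\XX]/I)\simeq \Der(\KK[\XX]/J)$; in particular the two weight systems (with multiplicities), consisting of vectors of the form $a-e_i$, must coincide. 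The main obstacle is then the purely combinatorial task of reversing this encoding: showing that the weights and root-group data of the $T$-action on $\Der$ determine the monomial ideal up to a permutation of the coordinates -- equivalently, that the staircase of monomials outside the ideal is recoverable from this data. Combining this with the freedom to permute the $y_i$ among themselves by an element of the Weyl group yields the desired isomorphism $B \cong \KK[\XX]/I$.
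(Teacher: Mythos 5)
Your first half --- producing a faithfully acting $n$-dimensional torus $T'\subset\Aut^0_\KK(B)$, showing faithfulness on $\M/\M^2$ via the unipotent-versus-semisimple argument, lifting a diagonalizing basis to semi-invariant generators $y_1,\dots,y_n$ whose weights form a $\Z$-basis of $X^*(T')$, and concluding that the kernel $J$ is monomial because distinct monomials carry distinct weights --- is correct and closely parallels the paper's \cref{Prop B is monomial}. Your route to the fact that the weights form a basis (faithfulness on the cotangent space, hence the image is a maximal torus of $\GL_n$) is a legitimate variant of the paper's argument, which instead invokes \cref{lema accion fiel} to see that the weights of the generators must generate $\Z^n$.

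However, the second half contains a genuine gap: you explicitly defer ``the purely combinatorial task of reversing this encoding'' --- recovering the staircase $\supp^c(I)$ from the $T$-weight multiplicities of $\Der(\KK[\XX]/I)$ --- and this is precisely the technical heart of the paper (\cref{lema dimension g alpha}, \cref{lema combinatorio}, \cref{corolario combinatorio}), not a routine verification. Two concrete difficulties that your sketch does not engage with: first, the dimension of the weight space $\mathfrak g_\alpha$ equals $\sharp\{e_i\mid\alpha+e_i\in\supp^c(I)\}$ only for inner $\alpha$; for outer $\alpha$ the existence of $\partial_{\alpha,e_k^*}$ on the quotient depends on whether it preserves $I$, which is an extra condition, and the paper must restrict to outer degrees with $\dim\mathfrak g_{\alpha+e_k}=0$ to get a clean formula. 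Second, the inner-degree data alone does \emph{not} determine the ideal: the paper's \cref{example} exhibits $I_1=(y^3,xy,x^3)$ and $I_2=(y^2,x^2)$ with identical functions $\alpha\mapsto\sharp\{e_i\mid\alpha+e_i\in\supp^c(I)\}$ on all inner degrees, so the recovery genuinely needs the outer-degree information together with the recursive reconstruction of \cref{lema combinatorio} (recovering a bounded-below set $C$ from $\sum_i\mathbb{1}_{C-e_i}$ slice by slice in the first coordinate). Without an argument for this injectivity, your proof is incomplete. A minor additional imprecision: the residual ambiguity after matching the two tori is a change of basis in $\GL_n(\Z)$ on the character lattice, not merely a Weyl-group permutation of the $y_i$, though this is absorbed once the tori are identified as in the paper's proof of \cref{thm ideales y automorfismos}.
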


Our main result follows the lines of several other results showing that certain algebraic objects are uniquely determined within a given category by their automorphism group. For instance, $\mathbb{P}^n$ is uniquely determined up to birational equivalence by its group of birational automorphisms in the category of irreducible varieties \cite{Can14,RUV24}. The affine space $\mathbb{A}^n$ is uniquely determined by its automorphism group, viewed as an ind-group, in the category of connected affine varieties \cite{Kra17}, by its automorphism group as an abstract group in the category of connected affine varieties \cite{cantat2019families}, and also by its automorphism group as an abstract group in the category of smooth irreducible quasi-projective varieties of dimension $n$ with nonvanishing Euler characteristic and finite Picard group \cite{kraft2021affine}. More generally, normal toric varieties non isomorphic to an algebraic torus are uniquely determined by their automorphism group in the category of irreducible normal affine varieties, and smooth affine spherical varieties non isomorphic to an algebraic torus are uniquely determined by their automorphism group in the category of irreducible smooth affine varieties \cite{LRU18,RV21b,RV24}. In the case of normal toric varieties, normality is essential, as shown in \cite{DLR23}, where the authors prove that if one drops the normality assumption, the variety is in general no longer determined by its automorphism group. The proof relies on a generalization, developed in \cite{DL24}, of the notion of Demazure roots introduced in \cite{Dem70} to affine toric varieties that are not necessarily normal.\\

Note that the assumptions in \cref{thm main} are indeed natural and to some extent necessary. Indeed, as we will see in \cref{example finiteness}, finite dimensionality is a necessary condition since $\KK[x]$, which is not finite-dimensional, has the same automorphism group as $\KK[x]/(x^3)$, which is a finite-dimensional monomial algebra. Now, monomial algebras are always local, while finite-dimensional algebras are always semi-local (i.e.~they have finitely many maximal ideals), so the reduction to the local case is not very restrictive. But this locality assumption is also necessary since, as we will show in \cref{example local}, $\KK[x]/(x^3)\oplus\KK[y]/(y^3)$ is a finite-dimensional (not local) algebra that has the same automorphism group as $\KK[x,y]/(x^2,y^2)$, which is finite-dimensional and monomial. The hypothesis on the residue field is also natural since the $\overline{\KK}$-algebra obtained after base change to an algebraic closure $\overline{\KK}$ of $\KK$ will be local if and only if its residue field was $\KK$ itself rather than a nontrivial finite extension of $\KK$. Moreover, the hypothesis is moot when $\KK$ is algebraically closed, which is most certainly the case of interest here.

The only hypothesis that seems really restrictive is the one on the dimension of $\M/\M^2$. However, if we look at these algebras geometrically, they represent zero-dimensional schemes (and more precisely, just a point) with a rich infinitesimal structure. In particular, the vector space $\M/\M^2$ corresponds to the cotangent space of the scheme, and hence gives an idea of the dimension of the infinitesimal geometric object being studied. In that sense, restricting oneself to objects ``of the same infinitesimal dimension'' makes sense. It would be interesting however to see if \cref{thm main} extends to the context of ``arbitrary infinitesimal dimension''. We have not been able to establish this.\\

In order to prove \cref{thm main}, we first prove that any algebra $B$ that satisfies the assumptions of the theorem must be monomial (\cref{Prop B is monomial}). Then, we need to prove that one can recover the monomial ideal $I$ from the automorphism group of the corresponding monomial algebra. This corresponds to the following statement, which does not use the hypothesis on the dimension of $\M/\M^2$.

\begin{theorem}\label{thm ideales y automorfismos}
Let $\KK$ be a field of characteristic 0, $A_1,A_2$ be finite-dimensional monomial $\KK$-algebras and assume that $\Aut^0_\KK(A_1)$ is isomorphic to $\Aut^0_\KK(A_2)$ as algebraic $\KK$-groups. Then $A_1$ is isomorphic to $A_2$. In particular, $A_1$ and $A_2$ can be presented as quotients of the same ring $\KK[\XX]$ by monomial ideals $I_1,I_2$ and, up to a change of variables sending $\XX^{m}$ to $\XX^{\varphi(m)}$ with $\varphi\in\GL_n(\Z)$, these two ideals coincide.
\end{theorem}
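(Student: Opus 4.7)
The plan is to extract, from the abstract algebraic group $\Aut^0_\KK(A_i)$, the combinatorial data of the order ideal $S(I_i) \subset \NN^{n_i}$ of standard monomials of $I_i$, up to the $\GL_n(\ZZ)$-ambiguity coming from automorphisms of the natural torus. Passing to Lie algebras, the given isomorphism provides an isomorphism $\Der(A_1) \simeq \Der(A_2)$. By the structural description of $\Aut^0_\KK(\KK[\XX]/I)$ in \cite{DLMR24}, the natural diagonal torus $T_i = \GM^{n_i}$ is a maximal torus of $\Aut^0_\KK(A_i)$; comparing ranks forces $n_1 = n_2 =: n$, and after composing with an inner automorphism we may assume that the given isomorphism carries $T_1$ to $T_2$. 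This produces an isomorphism of tori which, at the level of character lattices, is given by some $\varphi \in \Aut(\GM^n) = \GL_n(\ZZ)$, and $\varphi$ identifies the $T_1$-weight decomposition of $\Der(A_1)$ with the $T_2$-weight decomposition of $\Der(A_2)$.

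The central step is then to recover $S(I)$ from the weight multiset on $\ZZ^n$ of the $T$-adjoint action on $\Der(\KK[\XX]/I)$. A key simplification is the following: whenever $m \in S(I)$ and $m_j > 0$, the monomial derivation $\XX^m \partial_{x_j}$ automatically preserves $I$, because for every minimal generator $\XX^n$ of $I$ with $n_j > 0$, the monomial $\XX^{m + n - e_j}$ is divisible by $\XX^n$ and hence lies in $I$. Consequently each $m \in S(I) \setminus \{0\}$ contributes a weight $m - e_j \in \NN^n$ for every $j \in \supp(m)$, while any remaining transverse monomial derivation $\XX^m \partial_{x_j}$ with $m_j = 0$ contributes a weight having a single coordinate equal to $-1$. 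The two families live on disjoint regions of $\ZZ^n$, so from the $\NN^n$-part of the weight multiset one reads off the function $\alpha \mapsto |\{j : \alpha + e_j \in S(I)\}|$, which combined with the negative-coordinate weights and the Lie-bracket structure of $\Der(A)$ should allow one to reconstruct the order ideal $S(I)$ inductively along the divisibility order on $\NN^n$.

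Once $S(I_i)$ has been recovered, the element $\varphi$ restricts to a bijection $S(I_1) \to S(I_2)$, and the substitution $\XX^m \mapsto \XX^{\varphi(m)}$ provides the desired algebra isomorphism $A_1 \simeq A_2$. The main technical obstacle is the reconstruction step itself: a given weight in $\ZZ^n$ can be the weight of several distinct monomial derivations, so the map from $S(I)$ to the weight multiset is far from injective. Overcoming this requires a careful combinatorial argument, likely exploiting both the interplay between $S(I)$ and the minimal generators of $I$ and the additional structure of $\Der(A)$ as a graded Lie algebra encoded in its bracket.
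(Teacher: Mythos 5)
Your reduction steps coincide with the paper's: pass to Lie algebras, match maximal tori to force $n_1=n_2$, conjugate so that $T_1\mapsto T_2$, and read off the weight multiplicity function $\alpha\mapsto\dim\mathfrak g_\alpha$, which on inner degrees equals $\sharp\{j : \alpha+e_j\in\supp^c(I)\}$. But the proof stops exactly where the real work begins. The injectivity of the map $C\mapsto m_C$, $m_C(\alpha)=\sharp\{j:\alpha+e_j\in C\}$, on bounded-below subsets of $\ZZ^n$ is the paper's central combinatorial result (\cref{lema combinatorio}), proved by induction on the slices $C_a=\{\alpha\in C:\alpha_1\le a\}$ via the recursion $\mathbb{1}_{C_a}(\alpha)=m_C(\alpha-e_1)-m_{C_{\alpha_1-1}}(\alpha-e_1)$. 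You explicitly defer this (``should allow one to reconstruct\ldots requires a careful combinatorial argument''), so the argument as written does not establish the theorem; and note that, contrary to your suggestion, no Lie-bracket information is used in the paper --- the weight multiplicities alone suffice once this lemma is in place.

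A second, subtler gap concerns the outer weights. Your claim that the transverse derivations $\XX^m\partial_{x_j}$ with $m_j=0$ ``contribute'' weights with a coordinate equal to $-1$ glosses over the fact that such derivations need not preserve $I$, so $\dim\mathfrak g_\alpha$ for outer $\alpha$ is not simply $\sharp E_\alpha$. The paper only proves this equality under the additional hypothesis that $\alpha+e_k$ is inner with $\dim\mathfrak g_{\alpha+e_k}=0$ (second part of \cref{lema dimension g alpha}), and then checks in \cref{corolario combinatorio} that these particular values, together with the inner ones, determine all of $m_C$. This cannot be skipped: the two ideals of \cref{example} have identical inner weight data, so without a precise accounting of which outer multiplicities are available and what they equal, the reconstruction fails.
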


As a matter of fact, we will prove that if $I$ is a monomial ideal in $\KK[\XX]$, then we can recover $I$ from the Lie algebra of $\Aut_\KK(\KK[\XX]/I)$, i.e.~the algebra of derivations $\Der(\KK[\XX]/I)$, and its decomposition in weight spaces given by the adjoint action of $T$. Thus, \cref{thm ideales y automorfismos} can be seen as a complementary result to \cite{M95}, where it is proved that certain Stanley-Reisner algebras are determined by their algebra of derivations. Our main combinatorial result, \cref{lema combinatorio}, points towards a proof of this fact for an arbitrary monomial algebra, see \cref{remark: caso general}. In any case, the fact that the group of automorphisms can be replaced by its Lie algebra should not come as a surprise, given that these are linear groups by \cite[Proposition~4.1]{DLMR24} and every connected subgroup of $\GL_n(\KK)$ is uniquely determined by its Lie algebra when $\KK$ has characteristic 0, cf.~\cite[13.1]{Humphreys}.

\subsection*{Acknowledgements}
The authors would like to thank Andriy Regeta and Michel Brion for their comments on preliminary versions of this paper, the anonymous referees for their comments on the final version, and Gonzalo Manzano-Flores, Alvaro Liendo, Federico Castillo and Gary Mart\'inez-N\'u\~nez for interesting discussions.

\section{Preliminaries}\label{Section:preliminary}
Throughout the paper, $\KK$ will denote a field with characteristic zero and $n$ an element of $\Z_{>0}$. We fix a free $\ZZ$-module $M$ of rank $n$. We also define $N$ as the dual $\ZZ$-module $N=\operatorname{Hom}(M,\ZZ)$. There is a natural duality pairing
\[\langle\ ,\ \rangle \colon M\times N\to \ZZ,\quad  \mbox{defined by}\quad \langle m,p\rangle:=p(m)\,.\]
If we identify $M=\ZZ^n$ via the canonical basis $E=\{e_1,\ldots,e_n\}$ and $N=\ZZ^n$ via the dual basis $E^*=\{e^*_1\ldots,e^*_n\}$ of $E$, then the duality pairing simply becomes the standard scalar product. These two modules can and will be interpreted as the modules of characters and co-characters of the algebraic torus $T=\GM^n$, that is $M=\operatorname{Hom}(T,\GM)$ and $N=\operatorname{Hom}(\GM,T)$. The integer $p(m)$ for $m\in M$ and $p\in N$ can then be recovered as the integer such that $(m\circ p)(t)=t^{p(m)}$ for $t\in\GM$.

\subsection{Monomial ideals and monomial algebras}
In order to fix notations, we view the polynomial algebra $\KK[\XX]=\KK[x_1,\ldots,x_n]$ as a semigroup algebra $\KK[S]$ with respect to the monoid $S=\ZZ_{\geq 0}^n$. Recall that in all generality $\KK[S]$ is defined as
$$\KK[S]=\bigoplus_{m\in S}\KK\cdot\XX^m\quad\mbox{where}\quad \XX^m\cdot\XX^{m'}=\XX^{m+m'}\quad\mbox{and}\quad \XX^0=1\,.$$
By designating $\XX^{e_i}=x_i$, we obtain an isomorphism between $\KK[S]$ and $\KK[\XX]$, where $\XX^{m}$ represents the monomial $x_1^{m_1}\cdots x_n^{m_n}$. 

An ideal $I\subset \KK [\XX]$ is termed monomial if it has a generating set comprised of monomials, namely, $I=(\XX^{\mathbf{a}_1},\dots,\XX^{\mathbf{a}_l})$, with $\mathbf{a}_k\in \ZZ^n_{\geq 0}$. The support of a monomial ideal $I=(\XX^{\mathbf{a}_1},\dots,\XX^{\mathbf{a}_l})$ is the set
$$\supp(I)=\{m\in \ZZ^n_{\geq 0}\mid \mathbf{x}^m\in I\}=\bigcup_{k=1}^l (\mathbf{a}_k+\ZZ^n_{\geq 0}).$$
For practical reasons, we also define the co-support of $I$ as the complement of this set in $\Z_{\geq 0}^n$, that is
$$\supp^c(I)=\{m\in \ZZ^n_{\geq 0}\mid \mathbf{x}^m\not \in I\}.$$
A quotient $\KK[\XX]/I$ of a polynomial algebra by a monomial ideal is called a monomial algebra. The algebra $\KK[\XX]$ is naturally $\ZZ^n_{\geq 0}$-graded and under this grading every monomial ideal $I$ is a graded ideal. Therefore, the quotient $\KK[\XX]/I$ inherits a natural $\ZZ^n_{\geq 0}$-grading given by $\deg \quot{\XX}^m=m$ if $m\notin \supp(I),$ where $\quot{\XX}^m$ denotes the image of $\XX^m$ in $\KK[\XX]/I$. Recall that if $m\in \supp(I)$, then $\quot{\XX}^m=0$. In the sequel, we always regard $\KK[\XX]/I$ as a $\ZZ^n_{\geq 0}$-graded algebra. This grading naturally corresponds to an action of the torus $T=\GM^n$ given by $\mathbf{t}\cdot \XX^\alpha=\alpha(\mathbf{t})\XX^\alpha$.

Following \cite{DLMR24}, we say that a monomial ideal is full if $\XX^{e_i}\not\in I$ for every $1\leq i\leq n$. In this paper, \emph{we assume that all monomial ideals $I$ are full and that all monomial algebras are finite-dimensional as $\KK$-vector spaces}. In particular, $e_i\not\in\supp(I)$ for every $1\leq i\leq n$ and $\supp^c(I)$ is assumed to be finite.

\subsection{Derivations}\label{sec derivations}
A derivation $\partial$ on a $\KK $-algebra $B$ is a linear $\KK$-map satisfying the Leibniz rule, i.e., 
$$
\partial(fg)=\partial(f)g+f\partial(g), \quad \text{for all } f,g \in B.
$$
We denote by $\Der(B)$ the vector space of the derivations of $B$. Let $I\subset B$ be an ideal. We also denote by $\Der_I(B)\subset \Der(B)$ the subspace of derivations $\partial$ such that $\partial(I)\subset I$. Defining $[\partial,\partial']:=\partial\circ\partial'-\partial'\circ\partial$, both spaces are equipped with a Lie algebra structure.

Let $I$ be a monomial ideal on $\KK[\XX]$ and let $\partial\colon\KK[\XX]/I\to\KK[\XX]/I$ be a derivation.
Recall that $\KK[\XX]/I$ is $\ZZ^n_{\geq 0}$-graded. We say that $\partial$ is homogeneous if it sends homogeneous elements to homogeneous elements. By \cite[Lemma~1.1]{DL24} there exists a unique element $\alpha\in M=\ZZ^n$, called the degree of $\partial$, such that for every $\XX^m\notin\ker\partial$ we have $\partial(\XX^m)=\lambda\XX^{m+\alpha}$ for some $\lambda\in \KK^*$. We say that a homogeneous derivation $\partial$ is \emph{inner} if $\deg\partial\in \ZZ^n_{\geq 0}$ and \emph{outer} if $\deg\partial\in \ZZ^n\setminus \ZZ^n_{\geq 0}$. We apply the same terminology to the degrees themselves.

As we recalled in the Introduction, the Lie algebra $\Der(\KK[\XX]/I)$ is naturally related to the Lie algebra $\operatorname{Der}(\KK [\XX])$ and its subalgebra $\operatorname{Der}_I(\KK [\XX])$. These Lie algebras were studied for instance in \cite{B95,T09,KLL15,DLMR24}. We recall here some of these results.

First of all, by \cite[Theorem~2.2]{DLMR24}, we know that every derivation in $\Der(\KK[\XX]/I)$ comes from a derivation in $\Der_I(\KK[\XX])\subset\Der(\KK[\XX])$. In particular, homogeneous derivations come from homogeneous derivations, and these have a very particular shape. By \cite[Proposition~3.1]{KLL15}, every homogeneous derivation of degree $\alpha$ of $\KK[\XX]$ has the following form for a unique $p\in N_\KK$:
\begin{align}\label{equation:homogeneous}
\partial_{\alpha,p}\colon \KK[\XX]\to \KK[\XX]\quad\mbox{given by}\quad \XX^m\mapsto p(m)\XX^{m+\alpha}.
\end{align}
Moreover, if $\alpha$ is inner, then every derivation of $\KK[\XX]$ preserves any monomial ideal. In particular, the above formula defines a derivation of $\KK[\XX]/I$ for every $p\in N_\KK$.

On the other hand, by \cite[Theorem~2.7]{liendo2010affine}, if $\alpha$ is outer then the derivation of $\KK[\XX]$ inducing it must be of the form $\partial_{\alpha,\lambda e^*_k}$ with $1\leq k\leq n$, $\lambda\in \KK$ and $\alpha$ such that:
\begin{itemize}
    \item $e_k^*(\alpha)=-1$;
    \item $e_j^*(\alpha)\geq 0$ for every $j\neq k$;
    \item for every $\beta\in\supp(I)$, either $\alpha+\beta\in\supp(I)$ or $\alpha+\beta\not\in\Z_{\geq 0}^n$.
\end{itemize}
Note that the last condition, which ensures that the derivation preserves the ideal, can be checked by only looking at the generators of the ideal $I$.\\

In order to decide whether a derivation is nontrivial on the monomial algebra $\KK[\XX]/I$, we have the following result.

\begin{lemma}[\cite{DLMR24}, Lemma 3.9]\label{lema roberto}
The derivation $\overline\partial_{\alpha,p}$ of $\KK[\XX]/I$ induced by $\partial_{\alpha,p}\in\Der_I(\KK[\XX])$ is trivial if and only if
\[\alpha+(\ZZ_{\geq 0}^n\setminus p^\perp)\subset \supp(I).\]
\end{lemma}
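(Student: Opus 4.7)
The plan is to read off the induced derivation on a standard monomial basis of $\KK[\XX]/I$ and then match its vanishing against the stated inclusion. Recall that $\KK[\XX]/I$ has as a $\KK$-basis the set $\{\overline{\XX}^m : m \in \supp^c(I)\}$, and on such a basis element $\overline{\partial}_{\alpha,p}$ acts by $\overline{\partial}_{\alpha,p}(\overline{\XX}^m) = p(m)\,\overline{\XX}^{m+\alpha}$. Thus $\overline{\partial}_{\alpha,p} = 0$ if and only if for every $m \in \supp^c(I)$ either $p(m) = 0$, i.e.\ $m \in p^\perp$, or $\overline{\XX}^{m+\alpha} = 0$ in $\KK[\XX]/I$.

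Next I would reformulate the second alternative. The class $\overline{\XX}^{m+\alpha}$ vanishes in $\KK[\XX]/I$ exactly when $m+\alpha \in \supp(I)$ or $m+\alpha \notin \Z_{\geq 0}^n$; but the well-definedness of $\partial_{\alpha,p}$ as a derivation of $\KK[\XX]$ via formula \eqref{equation:homogeneous} already forces $p(m)=0$ whenever $m+\alpha \notin \Z_{\geq 0}^n$, so this extra case is automatically absorbed into the condition $m \in p^\perp$. Consequently, $\overline{\partial}_{\alpha,p} = 0$ is equivalent to
\[\alpha + (\supp^c(I) \setminus p^\perp) \subset \supp(I).\]

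The final step is to upgrade $\supp^c(I)$ to $\Z_{\geq 0}^n$ on the left-hand side. If $m \in \supp(I)$ and $p(m) \neq 0$, then the hypothesis $\partial_{\alpha,p} \in \Der_I(\KK[\XX])$ gives $p(m)\,\XX^{m+\alpha} = \partial_{\alpha,p}(\XX^m) \in I$, hence $m+\alpha \in \supp(I)$ automatically. This shows that the two inclusions $\alpha + (\supp^c(I) \setminus p^\perp) \subset \supp(I)$ and $\alpha + (\Z_{\geq 0}^n \setminus p^\perp) \subset \supp(I)$ are equivalent, and so the lemma follows. There is no real obstacle here beyond careful bookkeeping: both \emph{a priori} missing cases (exponents falling outside $\Z_{\geq 0}^n$, and exponents already inside $\supp(I)$) are handled by the well-definedness and $I$-invariance of $\partial_{\alpha,p}$, respectively.
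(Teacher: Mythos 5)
Your proof is correct. Note that the paper does not actually prove this statement---it is imported verbatim from \cite{DLMR24}, Lemma 3.10---so there is no in-paper argument to compare against; your computation of $\overline\partial_{\alpha,p}$ on the monomial basis $\{\overline{\XX}^m : m\in\supp^c(I)\}$, together with the two observations that well-definedness of $\partial_{\alpha,p}$ on $\KK[\XX]$ absorbs the case $m+\alpha\notin\Z_{\geq 0}^n$ and that $I$-invariance absorbs the case $m\in\supp(I)$, is exactly the natural complete argument.
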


\begin{example}\label{example}
In this example we show the $\alpha \in \ZZ^n$ such that $\partial_{\alpha,p}$ induces a nontrivial derivation for some $p\in N_\mathbf{k}$. In both \cref{fig:1-ex1,fig:2-ex1}, black bullets are the elements in $\supp(I_i)$, and green and red bullets are inner and outer degrees respectively for which there are nontrivial derivations.

\cref{fig:1-ex1} has the inner degrees $0$, $e_1$ and $e_2$ (in green) and the outer degrees $-e_1+2e_2$ and $2e_1-e_2$ (in red), while \cref{fig:2-ex1} only has inner degrees $0$, $e_1$ and $e_2$ (in green).

\begin{multicols}{2}

\centering

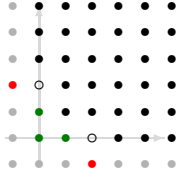
\begin{figure}[H]

\begin{picture}(100,80)
\definecolor{gray1}{gray}{0.7}
\definecolor{gray2}{gray}{0.85}
\definecolor{green}{RGB}{0,124,0}

\textcolor{gray2}{\put(0,20){\vector(1,0){60}}}

\textcolor{gray2}{\put(10,10){\vector(0,1){60}}}

\put(0,10){\textcolor{gray1}{\circle*{3}}}
\put(10,10){\textcolor{gray1}{\circle*{3}}}
\put(20,10){\textcolor{gray1}{\circle*{3}}} 
\put(30,10){\textcolor{red}{\circle*{3}}} 
\put(40,10){\textcolor{gray1}{\circle*{3}}} 
\put(50,10){\textcolor{gray1}{\circle*{3}}} 
\put(60,10){\textcolor{gray1}{\circle*{3}}}

\put(0,20){\textcolor{gray1}{\circle*{3}}}
\put(10,20){\textcolor{green}{\circle*{3}}} 
\put(20,20){\textcolor{green}{\circle*{3}}}
\put(30,20){\circle{3}} 
\put(40,20){\circle*{3}}
\put(50,20){\circle*{3}}
\put(60,20){\circle*{3}}

\put(0,30){\textcolor{gray1}{\circle*{3}}}
\put(10,30){\textcolor{green}{\circle*{3}}}
\put(20,30){\circle*{3}} 
\put(30,30){\circle*{3}} 
\put(40,30){\circle*{3}}
\put(50,30){\circle*{3}}
\put(60,30){{\circle*{3}}}

\put(0,40){\textcolor{red}{\circle*{3}}}
\put(10,40){{\circle{3}}}
\put(20,40){{\circle*{3}}}
\put(30,40){\circle*{3}} 
\put(40,40){{\circle*{3}}}
\put(50,40){{\circle*{3}}}
\put(60,40){\circle*{3}}

\put(0,50){\textcolor{gray1}{\circle*{3}}}
\put(10,50){{\circle*{3}}}
\put(20,50){{\circle*{3}}}
\put(30,50){{\circle*{3}}}
\put(40,50){\circle*{3}}
\put(50,50){\circle*{3}}
\put(60,50){\circle*{3}}

\put(0,60){\textcolor{gray1}{\circle*{3}}}
\put(10,60){{\circle*{3}}}
\put(20,60){{\circle*{3}}}
\put(30,60){{\circle*{3}}}
\put(40,60){\circle*{3}}
\put(50,60){\circle*{3}}
\put(60,60){\circle*{3}}

\put(0,70){\textcolor{gray1}{\circle*{3}}}
\put(10,70){{\circle*{3}}}
\put(20,70){{\circle*{3}}}
\put(30,70){{\circle*{3}}}
\put(40,70){{\circle*{3}}}
\put(50,70){\circle*{3}}
\put(60,70){\circle*{3}} 

\end{picture}
\caption{$I_1=(y^3,xy,x^3)$}\label{fig:1-ex1}
\end{figure}

\begin{figure}[H]
\begin{picture}(100,80)
\definecolor{gray1}{gray}{0.7}
\definecolor{gray2}{gray}{0.85}
\definecolor{green}{RGB}{0,124,0}

\textcolor{gray2}{\put(0,20){\vector(1,0){60}}}

\textcolor{gray2}{\put(10,10){\vector(0,1){60}}}

\put(0,10){\textcolor{gray1}{\circle*{3}}}
\put(10,10){\textcolor{gray1}{\circle*{3}}}
\put(20,10){\textcolor{gray1}{\circle*{3}}} 
\put(30,10){\textcolor{gray1}{\circle*{3}}}
\put(40,10){\textcolor{gray1}{\circle*{3}}} 
\put(50,10){\textcolor{gray1}{\circle*{3}}} 
\put(60,10){\textcolor{gray1}{\circle*{3}}}

\put(0,20){\textcolor{gray1}{\circle*{3}}}
\put(10,20){\textcolor{green}{\circle*{3}}} 
\put(20,20){\textcolor{green}{\circle*{3}}} 
\put(30,20){\circle*{3}}
\put(40,20){\circle*{3}}
\put(50,20){\circle*{3}}
\put(60,20){\circle*{3}}

\put(0,30){\textcolor{gray1}{\circle*{3}}}
\put(10,30){\textcolor{green}{\circle*{3}}} 
\put(20,30){\circle{3}}
\put(30,30){\circle*{3}}
\put(40,30){\circle*{3}}
\put(50,30){\circle*{3}}
\put(60,30){{\circle*{3}}}

\put(0,40){\textcolor{gray1}{\circle*{3}}}
\put(10,40){\circle*{3}}
\put(20,40){\circle*{3}}
\put(30,40){\circle*{3}}
\put(40,40){\circle*{3}}
\put(50,40){\circle*{3}}
\put(60,40){\circle*{3}}

\put(0,50){\textcolor{gray1}{\circle*{3}}} 
\put(10,50){\circle*{3}}
\put(20,50){\circle*{3}}
\put(30,50){\circle*{3}}
\put(40,50){\circle*{3}}
\put(50,50){\circle*{3}}
\put(60,50){\circle*{3}}

\put(0,60){\textcolor{gray1}{\circle*{3}}} 
\put(10,60){{\circle*{3}}} 
\put(20,60){{\circle*{3}}} 
\put(30,60){{\circle*{3}}} 
\put(40,60){\circle*{3}}
\put(50,60){\circle*{3}}
\put(60,60){\circle*{3}}

\put(0,70){\textcolor{gray1}{\circle*{3}}} 
\put(10,70){{\circle*{3}}}
\put(20,70){{\circle*{3}}}
\put(30,70){{\circle*{3}}}
\put(40,70){{\circle*{3}}}
\put(50,70){\circle*{3}}
\put(60,70){\circle*{3}} 

\end{picture}
\caption{$I_2=(y^2,x^2)$}\label{fig:2-ex1}
\end{figure}
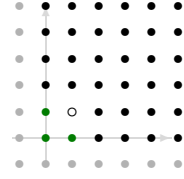

\end{multicols}

In both cases, the space of derivations of degree $0$ has dimension 2, and it corresponds to the Lie algebra of the torus $T$ acting on $\KK[\XX]/I_i$. Also in both cases, the spaces of derivations of degrees $e_1$ and $e_2$ each have dimension one. However, in $\KK[\XX]/I_1$ (\cref{fig:1-ex1}), the corresponding nontrivial derivations are induced by $\partial_{e_1,e_1^*}$ and $\partial_{e_2,e_2^*}$, while in $\KK[\XX]/I_2$ (\cref{fig:2-ex1}) the derivations are $\partial_{e_1,e_2^*}$ and $\partial_{e_2,e_1^*}$. One can check by hand that the corresponding Lie algebra of inner derivations are isomorphic as abstract Lie algebras.

\end{example}

\section{Recovering the monomial ideal $I$ from the automorphism group of $\KK[\XX]/I$.}\label{sec proof}
In this section we prove \cref{thm ideales y automorfismos}. Let $ I \subset \mathbb{Z}^n_{\geq 0} $ be a monomial ideal in $\KK[\XX]$ and let $G=\Aut_\KK(\KK[\XX]/I)$. We intend to recover the ideal $I$ assuming the group $G$ is given.

We start then by studying the dimension of the homogeneous components of the algebra of derivations. These correspond to the summands in the weight decomposition
\[\mathfrak g=\bigoplus_{\alpha\in\ZZ^n}\mathfrak g_\alpha,\]
given by the adjoint action of the torus $T\subset G$ on the Lie algebra $\mathfrak g$. For $\alpha\in\ZZ^n$, consider the following subset of the canonical basis $E\subset \Z^n$:
\begin{equation}\label{eqn def E_alpha}
E_\alpha:=\{e_i\mid \alpha+e_i\in\supp^c(I)\}.
\end{equation}
Note that this definition applies to outer $\alpha$'s as well. In that case, $E_\alpha$ can be nonempty only if there is a unique $1\leq k\leq n$ such that $\alpha_k=-1$ and $\alpha_j\geq 0$ for $j\neq k$.

\begin{lemma}\label{lema dimension g alpha}
Let $\alpha\in\Z^n$. Then the following hold:
\begin{itemize}
\item Assume that $\alpha$ is inner. Then $\dim\mathfrak{g}_\alpha=\sharp E_\alpha$.
\item Assume that $\alpha$ is outer, $\alpha+e_k$ is inner and $\dim\mathfrak{g}_{\alpha+e_k}=0$. Then $\dim\mathfrak{g}_{\alpha}=\sharp E_\alpha$, which is equal to $1$ if $\alpha+e_k\in\supp^c(I)$ and $0$ otherwise.
\end{itemize}
\end{lemma}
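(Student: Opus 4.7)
The plan is to compute $\dim\mathfrak g_\alpha$ directly in each case by combining the description of homogeneous derivations of $\KK[\XX]$ recalled in \cref{sec derivations} with the triviality criterion \cref{lema roberto}.

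For the inner case, every element of $\mathfrak g_\alpha$ arises as $\overline{\partial}_{\alpha,p}$ for some $p\in N_\KK$, yielding a surjective $\KK$-linear map $N_\KK\twoheadrightarrow\mathfrak g_\alpha$. By \cref{lema roberto} its kernel is the annihilator of
\[M_\alpha:=\{m\in\supp^c(I)\mid m+\alpha\in\supp^c(I)\},\]
so $\dim\mathfrak g_\alpha=\dim_\KK\spann_\KK(M_\alpha)$. The key step is then the identity $\spann_\KK(M_\alpha)=\spann_\KK(E_\alpha)$: the inclusion ``$\supseteq$'' follows from the fullness of $I$ (which gives $E_\alpha\subseteq M_\alpha$), and for ``$\subseteq$'' one observes that if $m\in M_\alpha$ and $m_i>0$, then $e_i+\alpha\in\supp^c(I)$; otherwise $m+\alpha=(e_i+\alpha)+(m-e_i)$ would lie in $\supp(I)$ by closure of $\supp(I)$ under translation by $\Z_{\geq 0}^n$, contradicting $m\in M_\alpha$. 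Writing $m=\sum_{i:\,m_i>0}m_i\,e_i$ and using linear independence of the $e_i$'s then yields $\dim\mathfrak g_\alpha=\sharp E_\alpha$.

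For the outer case, the sign conditions on $\alpha$ force $E_\alpha\subseteq\{e_k\}$, and every derivation in $\mathfrak g_\alpha$ comes from $\partial_{\alpha,\lambda e_k^*}$, so $\dim\mathfrak g_\alpha\leq 1$. If $\alpha+e_k\in\supp(I)$, then closure of $\supp(I)$ under translation by $\Z_{\geq 0}^n$ gives both that $\partial_{\alpha,e_k^*}$ preserves $I$ and (via \cref{lema roberto}, using $\Z_{\geq 0}^n\setminus(e_k^*)^\perp=e_k+\Z_{\geq 0}^n$) that it induces the trivial derivation, so $\dim\mathfrak g_\alpha=0=\sharp E_\alpha$. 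If instead $\alpha+e_k\in\supp^c(I)$, the hypothesis $\dim\mathfrak g_{\alpha+e_k}=0$ combined with the inner case just proved amounts to $\alpha+e_k+e_i\in\supp(I)$ for every $i$. Then, for any $\beta\in\supp(I)$ with $\beta_k\geq 1$, fullness of $I$ gives $\beta\neq e_k$, so some coordinate $(\beta-e_k)_j$ is positive, and one writes $\alpha+\beta=(\alpha+e_k+e_j)+(\beta-e_k-e_j)\in\supp(I)$. This is precisely the third bullet of \cref{sec derivations}, so $\partial_{\alpha,e_k^*}$ preserves $I$, and \cref{lema roberto} shows the induced derivation is nontrivial since $\alpha+e_k\notin\supp(I)$; hence $\dim\mathfrak g_\alpha=1=\sharp E_\alpha$.

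The one subtle point, and the only place where the auxiliary hypothesis really enters, is the verification in the outer case that $\partial_{\alpha,e_k^*}$ actually preserves $I$ when $\alpha+e_k\in\supp^c(I)$: without $\dim\mathfrak g_{\alpha+e_k}=0$ this may fail, in which case $\mathfrak g_\alpha$ collapses to $0$ while $E_\alpha=\{e_k\}$, so the hypothesis is exactly the combinatorial condition that rules out this collapse.
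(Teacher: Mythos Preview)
Your argument is correct and follows essentially the same route as the paper. The outer case is identical in substance: both invoke the inner case (via the hypothesis $\dim\mathfrak g_{\alpha+e_k}=0$) to obtain $\alpha+e_k+e_j\in\supp(I)$ for all $j$, then verify that $\partial_{\alpha,e_k^*}$ preserves $I$ by the same decomposition $\alpha+\beta=(\alpha+e_k+e_j)+(\beta-e_k-e_j)$. For the inner case the paper is organised slightly differently: it shows $\overline\partial_{\alpha,e_i^*}=0\Leftrightarrow e_i\notin E_\alpha$ directly from the identity $\Z_{\geq 0}^n\setminus(e_i^*)^\perp=e_i+\Z_{\geq 0}^n$ and then proves the surviving $\overline\partial_{\alpha,e_i^*}$ are linearly independent by evaluating at $\overline\XX^{e_i}$, whereas you pass through the annihilator of $M_\alpha$ and the equality $\spann_\KK(M_\alpha)=\spann_\KK(E_\alpha)$. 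Your repackaging is correct (the extra step $m\in M_\alpha,\ m_i>0\Rightarrow e_i\in E_\alpha$ is exactly the closure argument needed), but it rests on the same combinatorics and \cref{lema roberto}, so the two proofs are really the same in spirit.
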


\begin{proof}
We prove the first assertion. Let $\alpha\in\Z_{\geq 0}^n$ and $p\in N_\KK$. Since $(\ZZ_{\geq 0}^n\setminus (e_i^*)^\perp)=e_i+\ZZ_{\geq 0}^n$, we see by \cref{lema roberto} that \begin{align*}
\overline\partial_{\alpha,e_i^*}=0 & \Leftrightarrow \alpha+(\ZZ_{\geq 0}^n\setminus (e_i^*)^\perp)\subset \supp(I) \\
& \Leftrightarrow \alpha+e_i+\ZZ_{\geq 0}^n\subset \supp(I)\\
&\Leftrightarrow \alpha+e_i\in\supp(I)\\
&\Leftrightarrow e_i\not\in E_\alpha.
\end{align*}
Since the $\overline\partial_{\alpha,e_i^*}$ generate $\mathfrak{g}_\alpha$ as a vector space, we only need to prove that the $\overline\partial_{\alpha,e_i^*}$ with $e_i\in E_\alpha$ are linearly independent in $\mathfrak{g}_\alpha$. Now, if $\sum_{e_i\in E_\alpha}\lambda_i \overline\partial_{\alpha,e_i^*}=0$, then $\overline\partial_{\alpha,p}=0$ with $p=\sum_{e_i\in E_\alpha}\lambda_i e_i^*$. In particular,
\[\overline\partial_{\alpha,p}(\overline{\XX}^{e_i})=p(e_i)\cdot\overline{\XX}^{\alpha+e_i}=\lambda_i\overline{\XX}^{\alpha+e_i}=0.\]
And since $\alpha+e_i\not\in\supp(I)$, we see that $\overline{\XX}^{\alpha+e_i}\neq 0$, so that $\lambda_i=0$ for every $i$. This proves that the $\overline\partial_{\alpha,e_i^*}$ are linearly independent.\\

We prove now the second assertion. Consider an outer $\alpha$ such that $\alpha+e_k$ is inner and $\dim\mathfrak{g}_{\alpha+e_k}=0$. Since $\alpha$ is outer, the only possible derivations with degree $\alpha$ are those induced by multiples of $\partial_{\alpha,e_k^*}\in\Der(\KK[\XX])$, as we recalled in \cref{sec derivations}. Thus, in order to check whether $\dim\mathfrak{g}_{\alpha}$ is $0$ or $1$, we need to check whether this derivation preserves $I$ and whether it is nontrivial on $\KK[\XX]/I$.

Assume first that $\alpha+e_k\in\supp(I)$. Then on one hand $\sharp E_{\alpha}=0$ since clearly $\alpha+e_j$ is outer for $j\neq k$ and hence not in $\supp^c(I)$. On the other hand, we have $\alpha+e_k+\beta\in\supp(I)$ for any $\beta\in\Z_{\geq 0}^n$, so that \cref{lema roberto} shows that $\mathfrak{g}_{\alpha}=0$ and the result holds in this case.

Assume from now on that $\alpha+e_k\in\supp^c(I)\subset\Z_{\geq 0}^n$. Then clearly that $E_\alpha=\{e_k\}$ and so $\sharp E_\alpha=1$. Moreover, since $\dim\mathfrak{g}_{\alpha+e_k}=0$ and $\alpha+e_k$ is inner, we know by the first part that $\sharp E_{\alpha+e_k}=0$, so that $\alpha+e_k+e_j\in\supp(I)$ for every $1\leq j\leq n$. Since $\partial_{\alpha,e_k^*}(\XX^{e_k})=\XX^{\alpha+e_k}$ and $\alpha+e_k\in\supp^c(I)$, we see that the derivation will be nontrivial on $\KK[\XX]/I$ as long as it is well defined, i.e.~as long as it preserves $I$.

We check this last assertion. Consider $\beta\in\supp(I)$. We have two different cases:
\begin{itemize}
\item If $\beta_k=0$, then
\[\partial_{\alpha,e_k^*}(\XX^{\beta})=e_k^*(\beta)\XX^{\alpha+\beta}=\beta_k\XX^{\alpha+\beta}=0\in I,\]
and we are done.
\item If $\beta_k>0$, since $\beta\neq e_k$, there exists $1\leq j\leq n$ such that $\beta-e_k-e_j\in\Z_{\geq 0}^n$ (note that we may have $j=k$). Then,
\[\partial_{\alpha,e_k^*}(\XX^{\beta})=\beta_k\XX^{\alpha+\beta}=\beta_k\XX^{(\alpha+e_k+e_j)+(\beta-e_k-e_j)}.\]
And since $(\alpha+e_k+e_j)\in\supp(I)$ and $(\beta-e_k-e_j)\in\Z_{\geq 0}^n$, we see that $\XX^{\alpha+\beta}\in I$, so the assertion holds in this case as well.
\end{itemize}
\end{proof}

With \cref{lema dimension g alpha} in mind, starting from the monomial ideal $I$, we associate an integer $m_I(\alpha)\in\{0,1,\ldots,n\}$ to every $\alpha\in\Z^n$ by putting $m_I(\alpha):=\sharp E_\alpha$. This is a purely combinatorial datum that allows us to recover the ideal, as the following result shows.\\

We say that a subset $C\subset\ZZ^n$ is bounded below if there exist $b_1,\ldots,b_n\in\Z$ such that, for every $\alpha\in C$, we have $\alpha_i\geq b_i$ for every $1\leq i\leq n$. Moreover, for any element $\beta\in\Z^n$, we will use the notation $C-\beta$ for the set $\{\alpha -\beta \in \mathbb{Z}^n \mid \alpha \in C\}$, that is, the set $C$ ``shifted by $-\beta$''.

\begin{proposition}\label{lema combinatorio}
Let $\mathcal{F}$ be the family of bounded below subsets $C\subset\ZZ^n$. Consider the map
$m:\mathcal F\to \{f:\Z^n\to\Z\}$
that sends $C\in \mathcal{F}$ to $m_C:=\sum_{i=1}^n \mathbb{1}_{C- e_i}$, that is, the function $m_C\colon \mathbb{Z}^n\to \Z$ defined by
\[\alpha\mapsto m_C(\alpha):=\sharp\{e_i\mid \alpha+e_i\in C\}.\]
Then the map $m$ is injective.
\end{proposition}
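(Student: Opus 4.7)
My plan is to encode the problem as a vanishing statement in the formal power series ring $\Z[[z_1, \ldots, z_n]]$, and to exploit the fact that this ring is an integral domain.

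Given $C, C' \in \mathcal{F}$ with $m_C = m_{C'}$, I will set $f := \mathbb{1}_C - \mathbb{1}_{C'}\colon \Z^n \to \Z$; its support sits inside $C \cup C'$ and is therefore bounded below, so I may pick $b \in \Z^n$ with $\supp(f) \subset b + \Z_{\geq 0}^n$. The hypothesis rewrites as the identity
\[
\sum_{i=1}^n f(\alpha + e_i) = 0 \qquad \text{for every } \alpha \in \Z^n,
\]
and the goal becomes to deduce $f \equiv 0$.

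I will then introduce the formal power series
\[
F(z) := \sum_{\gamma \in \Z_{\geq 0}^n} f(\gamma + b)\, z^\gamma \in \Z[[z_1, \ldots, z_n]].
\]
Via the substitution $\beta = \alpha + e_i$, the shift $g \mapsto g(\cdot + e_i)$ corresponds on the associated generating series to multiplication by $z_i^{-1}$, so a short index manipulation (after multiplying through by $z_1 \cdots z_n$ to clear the negative exponents) should give the relation
\[
F(z) \cdot e_{n-1}(z_1, \ldots, z_n) = 0 \qquad \text{in } \Z[[z_1, \ldots, z_n]],
\]
where $e_{n-1}(z) = \sum_{i=1}^n z_1 \cdots \widehat{z_i} \cdots z_n$ is the elementary symmetric polynomial of degree $n-1$. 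Since $\Z[[z_1, \ldots, z_n]]$ is an integral domain and $e_{n-1}$ is a nonzero polynomial, I conclude $F = 0$, whence $f \equiv 0$ and $C = C'$.

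I expect the main technical point to be the careful translation of the combinatorial identity into a genuine relation inside $\Z[[z_1, \ldots, z_n]]$ (rather than in a larger two-sided formal object, where the integral-domain argument would not directly apply); this is exactly where the bounded-below hypothesis intervenes, via the shift by $b$ together with the multiplication by $z_1 \cdots z_n$. Once this bookkeeping has been carried out, the integral-domain finish is a one-line step.
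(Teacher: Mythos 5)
Your argument is correct, and it takes a genuinely different route from the paper's. The paper recovers $C$ from $m_C$ by an explicit recursion: it slices $C$ into the sets $C_a=\{\alpha\in C\mid \alpha_1\le a\}$ and proves by induction on $a$ a formula expressing $\mathbb{1}_{C_a}$ in terms of $m_C$ and the previously determined slices; that proof is constructive, literally an algorithm for reconstructing the ideal, which fits the paper's narrative of recovering $I$ from the automorphism group. You instead reduce injectivity to the vanishing of a product in $\Z[[z_1,\dots,z_n]]$ and finish with the integral-domain property; this is shorter, more conceptual, and generalizes at once (convolution against any nonzero finitely supported kernel is injective on bounded-below functions), but it is non-constructive. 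The one step you must write out with care is the identity $F(z)\,e_{n-1}(z)=0$ itself: setting $g(\gamma):=f(\gamma+b)$, the coefficient of $z^\mu$ in $F(z)\,e_{n-1}(z)$ is a priori only the partial sum $\sum_{i:\,\mu-\mathbf{1}+e_i\ge 0} g(\mu-\mathbf{1}+e_i)$, and when $\mu$ has a zero coordinate this omits some indices $i$. It equals the full sum $\sum_{i=1}^n g(\mu-\mathbf{1}+e_i)$ — which is $0$ by the hypothesis applied at $\alpha=b+\mu-\mathbf{1}$ — only because $g$ vanishes off $\Z_{\ge 0}^n$, so the omitted terms are zero anyway. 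In particular you genuinely need the hypothesis $m_C=m_{C'}$ at points $\alpha$ lying outside the translated orthant $b+\Z_{\ge 0}^n$; you do have it, since you state the identity for all $\alpha\in\Z^n$. With that bookkeeping carried out, the integral-domain conclusion ($\Z[[z_1,\dots,z_n]]$ is a domain and $e_{n-1}\neq 0$, hence $F=0$, hence $f\equiv 0$ and $C=C'$) is indeed immediate, and your proof is complete.
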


\begin{proof}
We will show how to recover the set $C$ from the function $m_C$. Since $C$ is bounded below, up to performing a translation of the function $m_C$, we may assume that $C\subset\Z_{\geq 0}^n$. Let $\alpha_1$ be the first coordinate of $\alpha$ and, for $a\in\Z_{\geq 0}$, denote by $C_a$ the subset $\{\alpha\in C\mid \alpha_1\leq a\}$. We will prove the following by induction on $a$:
\begin{itemize}
\item The function $\mathbb{1}_{C_a}$ is uniquely determined by $m_C$, and hence so is the function $m_{C_a}=\sum_{i=1}^n \mathbb{1}_{C_a-e_i}$. 
\item We have the formula
\begin{equation}\label{eqn formula recursiva m_C}
\mathbb{1}_{C_{a}}(\alpha)=\begin{cases}
m_C(\alpha-e_1)-m_{C_{\alpha_1-1}}(\alpha-e_1)& \text{if } 0\leq \alpha_1\leq a;\\
0 & \text{otherwise}.
\end{cases}
\end{equation}
\end{itemize}
Since $C\subset\Z^n_{\geq 0}$, for every $\alpha\in \Z^n$ we have $\mathbb{1}_C(\alpha)=\mathbb{1}_{C_a}(\alpha)$ for $a$ big enough (and depending on $\alpha$). Thus, the function $\mathbb{1}_{C}$ is uniquely determined by the functions $\mathbb{1}_{C_a}$ for $a\in\Z_{\geq 0}$, which are uniquely determined by $m_C$. This uniquely determines the set $C$, so that the map $m$ is indeed injective.\\

Let us start with $a=0$. If $\alpha_1=0$, then $\alpha-e_1+e_i\not\in\Z_{\geq 0}^n$ and hence $\alpha-e_1+e_i\not\in C$ for $i\geq 2$, which implies that $m_C(\alpha-e_1)\in\{0,1\}$. More precisely, for every such $\alpha$ we have
\[\alpha\in C\Leftrightarrow (\alpha-e_1)+e_1\in C\Leftrightarrow m_C(\alpha-e_1)=1.\]
In other words, we have
\[\mathbb{1}_{C_0}(\alpha)= \begin{cases}
   m_C(\alpha-e_1) & \text{if } \alpha_1=0;\\
   0 & \text{otherwise}.
\end{cases}\]
Recalling that $C_{-1}=\emptyset$ since $C\subset\Z_{\geq 0}^n$, this proves that the formula \eqref{eqn formula recursiva m_C} is valid in the case $a=0$.\\

Assume now that we have the induction hypothesis for a given $a$ and let us prove that
\begin{equation}\label{eqn formula recursiva m_C a+1}
\mathbb{1}_{C_{a+1}}(\alpha)=\begin{cases}
m_C(\alpha-e_1)-m_{C_{\alpha_1-1}}(\alpha-e_1)& \text{if } 0\leq \alpha_1\leq a+1;\\
0 & \text{otherwise}.\end{cases}
\end{equation}
First, note that for any $\alpha$ such that $\alpha_1\leq a$ or $\alpha_1\geq a+2$ we have $\mathbb{1}_{C_{a+1}}(\alpha)=\mathbb{1}_{C_{a}}(\alpha)$, and since $\mathbb{1}_{C_a}$ is determined by \eqref{eqn formula recursiva m_C}, we get the validity of \eqref{eqn formula recursiva m_C a+1} for such $\alpha$. Thus, we only need to prove the validity of \eqref{eqn formula recursiva m_C a+1} for $\alpha$ with $\alpha_1=a+1$.

Fix then $\alpha\in \Z^n$ such that $\alpha_1=a+1$. Then $\mathbb{1}_{C}(\alpha)=\mathbb{1}_{C_{a+1}}(\alpha)$. We also have
\[\alpha-e_1\in C \Leftrightarrow \alpha-e_1\in C_a,\]
and, more generally, for every $j\geq 2$,
\[\alpha-e_1+e_j\in C \Leftrightarrow \alpha-e_1+e_j\in C_a.\]
We have then two cases:
\begin{itemize}
    \item If $\alpha\not\in C$ (i.e. if $\mathbb{1}_{C_{a+1}}(\alpha)=0$), then $m_C(\alpha-e_1)=m_{C_a}(\alpha-e_1)$.
    \item If $\alpha\in C$ (i.e. if $\mathbb{1}_{C_{a+1}}(\alpha)=1$), then $m_C(\alpha-e_1)=m_{C_a}(\alpha-e_1)+1$.
\end{itemize}
Since here $a=\alpha_1-1$, this yields the formula for this particular $\alpha$ as well, concluding the proof.
\end{proof}

\begin{corollary}\label{corolario combinatorio}
Let $I$ be a monomial ideal and consider the function $m_I:\Z^n\to\Z$ defined by $m_\alpha:=\sharp E_\alpha$, where $E_\alpha$ is defined in \eqref{eqn def E_alpha}. Then $I$ is uniquely determined by the values of $m_I$ on every inner degree $\alpha$ and on those outer degrees $\alpha$ such that $\alpha+e_k$ is inner and $m_C(\alpha+e_k)=0$ for some $1\leq k\leq n$.
\end{corollary}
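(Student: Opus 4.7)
The plan is to deduce the corollary from Proposition~\ref{lema combinatorio} applied to the set $C := \supp^c(I)$. Since $\KK[\XX]/I$ is finite-dimensional, $C$ is finite (hence bounded below), and since $\supp(I) = \ZZ^n_{\geq 0} \setminus C$, recovering $C$ is equivalent to recovering $I$. Moreover $m_I$ coincides with the function $m_C$ of Proposition~\ref{lema combinatorio}, so that proposition already tells us the full function $m_I$ on $\ZZ^n$ determines $I$; the task is to verify that the restricted list of values in the corollary's hypothesis is already enough to carry out the inductive reconstruction of $C$ given there.

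Write $C_a := \{\alpha \in C : \alpha_1 \leq a\}$. The inductive step $a \to a+1$ uses the formula $\mathbb{1}_{C_{a+1}}(\alpha) = m_I(\alpha - e_1) - m_{C_a}(\alpha - e_1)$, which we only need for $\alpha \in \ZZ^n_{\geq 0}$ with $\alpha_1 = a+1$, as $\mathbb{1}_C$ vanishes off the positive orthant. For such $\alpha$, the point $\alpha - e_1$ has first coordinate $a \geq 0$ and remaining coordinates non-negative, hence is an inner degree, so $m_I(\alpha - e_1)$ is furnished by the corollary's first hypothesis while $m_{C_a}(\alpha - e_1)$ is known by induction. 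The inductive step thus causes no trouble.

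The delicate point is the base step $a = 0$, where Proposition~\ref{lema combinatorio} yields $\mathbb{1}_{C_0}(\alpha) = m_I(\alpha - e_1)$ for $\alpha \in \ZZ^n_{\geq 0}$ with $\alpha_1 = 0$: now $\alpha - e_1$ is outer, and the corollary's second hypothesis (with $k = 1$) supplies $m_I(\alpha - e_1)$ only when $m_I(\alpha) = 0$. To fill the gap I would split on the value of $m_I(\alpha)$. When $m_I(\alpha) > 0$, the set $E_\alpha$ is non-empty, so some $\alpha + e_i$ lies in $C$; the downward closedness of $\supp^c(I)$, which is a consequence of the upward closedness of $\supp(I)$ for monomial ideals, then forces $\alpha \in C$ and yields $\mathbb{1}_{C_0}(\alpha) = 1$ directly, bypassing any need for $m_I(\alpha - e_1)$. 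When $m_I(\alpha) = 0$, the outer point $\alpha - e_1$ is exactly of the form admitted by the corollary, so $m_I(\alpha - e_1)$ is available and the formula of Proposition~\ref{lema combinatorio} applies verbatim. This reconciliation between the outer values formally called for by Proposition~\ref{lema combinatorio} and those actually supplied by the corollary is the only real subtlety; all other parts of the argument are a direct citation of that proposition.
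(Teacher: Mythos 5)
Your proof is correct and follows essentially the same route as the paper: both reduce to \cref{lema combinatorio} applied to $C=\supp^c(I)$ and both rest on the same key observation that upward-closedness of $\supp(I)$ determines the missing outer values (if $m_I(\alpha)>0$ for inner $\alpha$ then $\alpha\in C$, equivalently $m_C(\alpha-e_1)=1$). The only difference is presentational: the paper first shows the restricted data determines the \emph{entire} function $m_C$ and then invokes the proposition as a black box, whereas you re-open its inductive proof and check which values are actually consumed --- both are fine.
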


\begin{proof}
Using \cref{lema combinatorio}, it will suffice to prove that, when $C$ is the co-support of a monomial ideal $I$, one can obtain the remaining values of $m_C$ from those already given. Since these already given values include $m_C(\alpha)$ for every inner degree $\alpha$, we only need to consider an outer degree $\alpha$.

Now, if $\alpha+e_k$ is outer for $1\leq k\leq n$, then we know that $m_C(\alpha)=0$ for any co-support $C$, so we already know these values. If $\alpha+e_k$ is inner for some $1\leq k\leq n$, then such a $k$ is unique, so that $m_C(\alpha)\in\{0,1\}$ since $C\subset\Z_{\geq 0}^n$. Then we have two options:
\begin{itemize}
    \item either $m_C(\alpha+e_k)=0$ and the value $m_C(\alpha)$ is already given; or
    \item $m_C(\alpha+e_k)>0$, in which case we claim that $\alpha+e_k\in\supp^c(I)$ and hence $m_C(\alpha)=1$, which concludes the proof.
\end{itemize}
The last claim is easy to settle: if $\beta\in\supp(I)$, then $\beta+e_j\in\supp(I)$ for every $1\leq j\leq n$, so that $m_C(\beta)=0$. So $m_C(\beta)>0$ and $\beta$ inner implies $\beta\in\supp^c(I)$.
\end{proof}

Now we know how to associate a combinatorial datum, in the form of a function $m_I:\Z^n\to\Z$, to any monomial ideal $I$. On the other hand, starting from the linear algebraic group $G$ with maximal torus $T$ (recall from \cref{Section:preliminary} that the action of the torus $T=\GM^n$ comes naturally from the $\Z_{\geq 0}$-grading of the algebra), we may associate to every $\alpha\in\Z^n$ the number $\dim(\mathfrak{g}_\alpha)$, which is related to $m_I$ thanks to \cref{lema dimension g alpha}. This is all we need in order to prove \cref{thm ideales y automorfismos}.

\begin{proof}[Proof of \cref{thm ideales y automorfismos}]
Let $G_i=\Aut_\KK(A_i)$ for $i=1,2$ and let $\varphi:G_1\to G_2$ denote an isomorphism between $G_1$ and $G_2$. Since this is an isomorphism between linear algebraic groups, it sends maximal tori to maximal tori. In particular, the dimension of these tori coincide. Now, as it is noted in \cite[Lemma 4.5]{DLMR24}, the dimension of a maximal torus of $G_i$ coincides with the number of variables in the polynomial ring from which we obtain $A_i$. Thus, we may assume that both $A_1$ and $A_2$ are quotients of the polynomial ring $\KK[\XX]$ in $n$ variables by respective monomial ideals $I_1$ and $I_2$.

Denote by $T_i$ the maximal torus of $G_i$ acting on $\KK[\XX]/I_i$ in the standard way and denote by $M_i$ the corresponding group of characters. The torus $\varphi(T_1)\subset G_2$ is maximal, hence conjugate to $T_2$ by an element $g\in G_2$. Up to replacing $\varphi$ by its composition with the conjugation by $g$, we may assume that $\varphi$ sends $T_1$ to $T_2$. In particular, we have an induced isomorphism $\varphi^*:M_2\to M_1:\beta\mapsto \beta\circ\varphi$. This shows that up to a change of variables on one algebra given by $\varphi^*$, we may identify $T_1$ and $T_2$ with a single torus $T$ acting on $\KK[\XX]$ in the usual way, and hence we may identify both $M_1$ and $M_2$ with $\Z^n$ in the usual way.

Consider now the isomorphism of Lie algebras $\phi:\mathfrak{g}_1\to\mathfrak{g}_2$ induced by $\varphi$. Since $\varphi$ is an isomorphism of algebraic groups that induces the identity on $T$, we see that $\phi$ is a $T$-equivariant linear isomorphism with respect to the adjoint actions. In particular, $\phi$ induces isomorphisms $\phi_\alpha:\mathfrak{g}_{1,\alpha}\to\mathfrak{g}_{2,\alpha}$ for every $\alpha\in \Z^n$. This obviously shows that $\dim(\mathfrak{g}_{1,\alpha})=\dim(\mathfrak{g}_{2,\alpha})$ for every $\alpha\in\Z^n$, and hence, by \cref{lema dimension g alpha} and \cref{corolario combinatorio}, we know that both data come from the same monomial ideal $I\subset\KK[\XX]$. In other words, up to the change of variables we did previously, $I_1$ and $I_2$ coincide.
\end{proof}

\begin{remark}
The information supplied in \cref{corolario combinatorio} by the outer degrees is strictly necessary in general. Indeed, one can immediately check that the monomial ideals $I_1$ and $I_2$ given in \cref{example} give away the same function $m_I$ when restricted to inner degrees: we have $m_{I}(0)=2$, $m_{I}(e_j)=1$ for $j=1,2$ and $m_{I}(\alpha)=0$ otherwise if $\alpha\in\Z_{\geq 0}^n$. In particular, as it was stated therein, the corresponding Lie algebras of inner derivations are isomorphic.
\end{remark}

\begin{remark}\label{remark: caso general}
If we consider a general monomial ideal $I\subset \KK[\XX]$ (i.e.~with $\KK[\XX]/I$ not necessarily finite-dimensional) and replace $\mathfrak{g}$ with the graded Lie algebra of derivations $\Der(\KK[\XX]/I)$, then \cref{lema dimension g alpha}, and \cref{corolario combinatorio} hold with the exact same proofs. This shows that the graded algebra of derivations determines the monomial algebra in all generality.
\end{remark}

\section{Counterexamples and proof of the \cref{thm main}}
In this last section we focus on \cref{thm main}. As we claimed in the Introduction, the hypothesis of $B$ being a finite-dimensional $\KK$-algebra is necessary in order to obtain the result, as the following example shows.

\begin{example}\label{example finiteness}
Consider the infinite-dimensional $\KK$-algebra $\KK[x]$. An automorphism $\varphi$ of $\KK[x]$ is uniquely determined by the image of $x$. Since $\varphi$ is bijective, there exists $P\in\KK[x]$ such that $\varphi(P)=P(\varphi(x))=x$. And since $\deg(P(\varphi(x)))=\deg(P)\deg(\varphi(x))$, we see that $\deg(\varphi(x))$ must be $1$. In other words, $\varphi(x)$ must be a linear polynomial $ax+b$ with $a\in\KK^*$ and $b\in\KK$. Direct computations show that automorphisms of the form $x\mapsto ax$ define a subgroup isomorphic to $\GM$, while those of the form $x\mapsto x+b$ define a subgroup isomorphic to $\GA$. One last direct computation shows then that the automorphism group of $\KK[x]$ is isomorphic to $\GA\rtimes\GM$, where $\mathbb{G}_m$ acts on $\mathbb{G}_a$ with weight $\alpha=1$.

On the other hand, the automorphism group of the monomial algebra $\KK[x]/(x^3)$ can be described either in a similar explicit fashion, where one discovers that an automorphism sends $\bar x$ to $a\bar x+b\bar x^2$ with $a\in\KK^*$ and $b\in\KK$, or by looking at \cref{fig:ex-2} and following the theory developed in \cite[Section 4]{DLMR24}. If $I=(x^3)$, then $\supp(I)=\{3,4,\ldots\}$ (the black bullets in \cref{fig:ex-2}) and $\supp^c(I)=\{0,1,2\}$, and hence the function $m_I$ gives $1$ if $\alpha=0,1$ (the green bullets in \cref{fig:ex-2}) and $0$ otherwise. Following \cite[Example 4.24]{DLMR24}, the automorphism group is connected and generated by the torus $T = \GM$, whose Lie algebra corresponds to the weight $0$ subspace in the algebra of derivations, and the root group $U_\alpha = \GA$, whose Lie algebra corresponds to derivations with weight $\alpha=1$. Explicitly, $T$ acts by $\bar x \mapsto a \bar x$ and $U_\alpha$ by $\bar x \mapsto \bar x + b \bar x^2$, for $a\in\KK^*$ and $b\in\KK$. A direct computation confirms then that the action of $\GM$ on $\GA$ by conjugation has weight 1.

\begin{figure}[H]
\begin{picture}(100,20)
\definecolor{gray1}{gray}{0.7}
\definecolor{gray2}{gray}{0.85}
\definecolor{green}{RGB}{0,124,0}

\textcolor{gray2}{\put(0,20){\vector(1,0){80}}}

\put(0,20){\textcolor{gray1}{\circle*{3}}}
\put(10,20){\textcolor{gray1}{\circle*{3}}}
\put(20,20){\textcolor{green}{\circle*{3}}} 
\put(30,20){\textcolor{green}{\circle*{3}}} 
\put(40,20){\circle{3}} 
\put(50,20){\circle*{3}} 
\put(60,20){\circle*{3}}
\put(70,20){\circle*{3}}

\put(0,0){$-1$}
\put(18,0){$0$}
\put(28,0){$1$}
\put(38,0){$2$}
\put(48,0){$3$}
\put(58,0){$4$}
\put(68,0){$5$}

\end{picture}
\caption{$I=(x^3)$}\label{fig:ex-2}
\end{figure}

All in all, we have an isomorphism $ \Aut_\KK(\KK[x]) \simeq \Aut_\KK(\KK[x]/(x^3))$, yet $\KK[x]$ is clearly not isomorphic to $ \KK[x]/(x^3)$.
\end{example}

In the Introduction we also claimed that the algebra $B$ in \cref{thm main} needs to be local. The following example shows why.

\begin{example}\label{example local}
Consider the finite-dimensional $\KK$-algebra $B_1 = \KK[x]/(x^3) \oplus \KK[y]/(y^3) $ and the finite-dimensional monomial $\KK$-algebra $ B_2=\KK[x, y]/(x^2, y^2) $ (see \cref{fig:2-ex1}). The automorphism group of the latter is computed in \cite[Example 4.25]{DLMR24}, but we give some details here for the comfort of the reader. As we remarked in \cref{example}, and according to \cite[Theorem 4.22]{DLMR24}, this group has two root groups corresponding to the roots $e_1$ and $e_2$, which commute since the corresponding derivations commute. Thus, they generate a subgroup isomorphic to $\GA^2$ on which $T=\GM^2$ acts coordinatewise, with weight one on each coordinate, so that
\[\Aut_\KK^0(B_2)\simeq \GA^2\rtimes\GM^2=(\GA\rtimes\GM)^2.\]
Moreover, \cite[Theorem 4.22]{DLMR24} shows that the finite quotient $\Aut_\KK(B_2)/\Aut_\KK^0(B_2)$ is isomorphic to $\Z/2\Z$ and it is given by the permutation of the variables, which corresponds to the permutation of the two copies of $(\GA\rtimes\GM)$. Hence,
\[\Aut_\KK(B_2)\simeq (\GA\rtimes\GM)^2\rtimes\Z/2\Z.\]

On the other hand, since the two components of $B_1$ are isomorphic, it is easy to see that
\[\Aut_\KK(B_1)=(\Aut_\KK(\KK[x]/(x^3)))^2\rtimes\Z/2\Z,\]
where $\Z/2\Z$ permutes the two components of the direct sum. Now, we already computed $\Aut_\KK(\KK[x]/(x^3))$ in \cref{example finiteness}, so we finally get that
$\Aut_\KK(B_1) \simeq (\GA\rtimes\GM)^2\rtimes\Z/2\Z\simeq \Aut_\KK(B_2)$. But again, they are not isomorphic as algebras, since one of them is local and the other is not.
\end{example}

These examples show that we must focus on finite-dimensional, local $\KK$-algebras. The last hypothesis in \cref{thm main}, that is, the dimension of the cotangent space $\M/\M^2$ being $n$, is crucial in order to compare such an algebra with a monomial algebra. This is the subject of our last result.

\begin{proposition}\label{Prop B is monomial}
Let $\KK$ be a field of characteristic $0$, \( (B, \mathfrak{m}) \) a local $\KK$-algebra with residue field $\KK$ and \( \dim(\mathfrak{m}/\mathfrak{m}^2) = n \), and let $\KK[\XX]=\KK[x_1,\ldots,x_n]$. Assume that \( \Aut_\KK(B) \simeq \Aut_\KK(\KK[\XX]/I) \) as algebraic $\KK$-groups, where $\KK[\XX]/I$ is a finite-dimensional monomial algebra. Then \( B \) is a monomial algebra.
\end{proposition}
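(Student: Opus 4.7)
The plan is to combine a Cohen-type presentation of $B$ with the existence of a large torus in $\Aut_\KK(B)$ (coming from the isomorphism hypothesis) in order to show that, after choosing the right generators of $\M$, the defining ideal of $B$ becomes monomial.

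\textbf{Step 1 (presentation via an equivariant surjection).} The group $\Aut_\KK(\KK[\XX]/I)$ contains the standard torus $T=\GM^n$ as a maximal torus, of dimension $n$, by Lemma~4.5 of \cite{DLMR24}. Transporting $T$ through the given isomorphism yields a maximal torus $T'\subset \Aut_\KK(B)$ of dimension $n$. Since $T'$ is reductive and acts algebraically on $B$ preserving $\M$ and $\M^2$, the $T'$-module $\M$ admits a weight-space decomposition, and one can choose weight vectors $y_1,\ldots,y_n\in\M$, with weights $\chi_1,\ldots,\chi_n\in X(T')$, whose images form a basis of $\M/\M^2$. Nakayama then gives that $y_1,\ldots,y_n$ generate $\M$ and hence $B$ as a $\KK$-algebra, producing a $T'$-equivariant surjection $\phi\colon\KK[\XX]\twoheadrightarrow B$ sending $x_i\mapsto y_i$, with $T'$ acting on $\KK[\XX]$ by $t\cdot x_i=\chi_i(t)\,x_i$. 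The kernel $J=\ker(\phi)$ is a $T'$-stable ideal contained in $(x_1,\ldots,x_n)^2$.

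\textbf{Step 2 (linear independence of the weights).} The inclusion $T'\hookrightarrow\Aut_\KK(B)$ is faithful by construction. Suppose that $\chi_1,\ldots,\chi_n$ satisfied a non-trivial linear relation in $X(T')\simeq\Z^n$; since the character lattice has rank $n$, there would exist a non-trivial cocharacter $\lambda\colon\GM\to T'$ with $\langle\chi_i,\lambda\rangle=0$ for every $i$. The equality $\lambda(t)\cdot y_i=t^{\langle\chi_i,\lambda\rangle}y_i=y_i$ would then show that $\lambda$ acts trivially on every generator $y_i$, hence on all of $B$, contradicting faithfulness. Therefore $\chi_1,\ldots,\chi_n$ are linearly independent.

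\textbf{Step 3 (conclusion).} Linear independence of $\chi_1,\ldots,\chi_n$ implies that each $T'$-weight space of $\KK[\XX]$ is at most one-dimensional and spanned by a single monomial $\XX^m$. Hence any $T'$-invariant subspace of $\KK[\XX]$---and in particular the ideal $J$---is generated by monomials. Consequently $B\simeq\KK[\XX]/J$ is a monomial algebra.

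The main obstacle is Step~1, namely the equivariant choice of generators: we need the action of $T'$ on $B$ to be algebraic enough to decompose $\M$ into weight spaces and lift a basis of $\M/\M^2$ to eigenvectors in $\M$. This relies on the reductivity of $T'$ together with the algebraicity of its action on $B$, the latter being furnished by the linear algebraic group structure on $\Aut_\KK(B)$ coming from the isomorphism with $\Aut_\KK(\KK[\XX]/I)$.
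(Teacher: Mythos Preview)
Your proof is correct and follows essentially the same route as the paper's: both use the maximal torus of dimension $n$ transported from $\Aut_\KK(\KK[\XX]/I)$, pick $n$ homogeneous generators of $\M$ via Nakayama, and use faithfulness of the torus action to see that the weights of these generators are independent, forcing the defining ideal $J$ to be monomial. The only cosmetic differences are that the paper argues the weights actually form a $\Z$-basis (not merely linearly independent) and then invokes a change of variables plus \cite{MS05}, whereas your direct observation that linear independence already makes each weight space of $\KK[\XX]$ one-dimensional is slightly cleaner; also, your worry in the last paragraph is unnecessary, since the finiteness of $B$ (implicit in the statement and used in the paper's proof as well) makes $\Aut_\KK(B)\subset\GL(B)$ a linear algebraic group and the $T'$-action on $B$ automatically algebraic.
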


Note that, together with \cref{thm ideales y automorfismos}, this result immediately yields \cref{thm main}, so we are only left with one last proof.

\begin{proof}[Proof of \cref{Prop B is monomial}]
Since \( \dim(\mathfrak{m}/\mathfrak{m}^2) = n \), we know by Nakayama's Lemma \cite[Corollary~4.8]{E95} that any minimal generating set of $B$ as a $\KK$-algebra has exactly $n$ elements. We may write then $B=\KK[\XX]/J$ for a certain ideal $J$. We claim that the automorphism group of such an algebra is a linear algebraic group. Indeed, it acts faithfully and linearly on the finite-dimensional space $B$, so that it is isomorphic to a subgroup $G$ of $\GL(B)$, which is isomorphic to $\GL_m(\KK)$ for $m=\dim(B)$. It suffices then to prove that $G$ is Zariski closed in $\GL(B)$. Now, every automorphism of $B$ corresponds to an endomorphism of $\KK[\XX]$ sending $J$ to itself. This implies that an element $\varphi\in \GL(B)$ belongs to $G$ if and only if, for every polynomial $Q\in J$, we have $Q(\varphi(\bar x_1),\ldots,\varphi(\bar x_n))=0\in B$. Each polynomial $Q\in J$ defines thus a Zariski-closed condition on an element of $\GL(B)$ to belong to $G$. Since we are taking the intersection of the subgroups of $\GL_m(\KK)$ satisfying all of these conditions, we see that this is indeed a Zariski-closed subgroup of $\GL(B)$.

Now, since \( \Aut_\KK(B) \simeq \Aut_\KK(\KK[\XX]/I) \), the dimension of their maximal tori coincide. In particular, we know that the torus $T=\GM^n$ acts faithfully on $B$, inducing a $\Z^n$-grading on $B$. In other words, $B=\bigoplus_{\alpha\in\Z^n} B_\alpha$. Up to decomposing the generators $\bar x_i\in B$ into homogeneous pieces with respect to this grading and recalling that any minimal generating set of $B$ as a $\KK$-algebra has exactly $n$ elements, we may assume that $\bar x_i\in B_{\alpha_i}$ for some $\alpha_i\in\Z^n$.

In order to conclude, by \cref{lema accion fiel} below, we know that the set of those $\alpha\in\Z^n$ with $B_\alpha\neq 0$ must generate the whole group $\Z^n$. Now, since the $\bar x_i$ generate $B$, we see that all such $\alpha$ must be generated by the $\alpha_i$. In other words, $\{\alpha_1,\ldots,\alpha_n\}$ is a $\Z$-basis of $\Z^n$. Then, as in \cref{thm ideales y automorfismos}, up to a change of variables sending $\XX^{m}$ to $\XX^{\varphi(m)}$ with $\varphi\in\GL_n(\Z)$, we may assume that $\alpha_i=e_i$, in which case we see that $B=\KK[\XX]/J$ with the usual action of $T$, i.e.~$B$ is monomial \cite[Proposition~2.1]{MS05}, as wished.
\end{proof}

\begin{lemma}\label{lema accion fiel}
Let $\KK$ be a field of characteristic $0$ and let $B$ be a $\KK$-algebra. Let $T$ be a split $\KK$-torus acting on $\spec(B)$ and let $B=\bigoplus_{\alpha\in\Z^n}B_\alpha$ be the corresponding $\Z^n$-grading. Then the action of $T$ is faithful if and only if the set $\{\alpha\in\Z^n\mid B_\alpha\neq 0\}$ generates $\Z^n$.
\end{lemma}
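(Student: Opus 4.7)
The plan is to translate faithfulness of the $T$-action into the triviality of a certain subgroup scheme of $T$, and then to read off the answer from Cartier duality. First I would recall the standard dictionary: a $T$-action on $\spec(B)$ is equivalent to the given $\Z^n$-grading on $B$, under which a geometric point $t\in T(\overline{\KK})$ acts on $b\in B_\alpha\otimes\overline{\KK}$ via $t\cdot b=\alpha(t)\,b$. From this it is immediate that the schematic kernel $K\subset T$ of the action is
\[
K\;=\;\bigcap_{\alpha:\,B_\alpha\neq 0}\ker\!\bigl(\alpha\colon T\to \GM\bigr),
\]
and that the $T$-action is faithful if and only if $K$ is trivial.

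The next step is the following key reduction. Let $L\subset\Z^n$ be the subgroup generated by $\{\alpha\in\Z^n\mid B_\alpha\neq 0\}$. The intersection above depends only on $L$, and under the identification $T=\operatorname{Hom}(\Z^n,\GM)$ coming from the duality between $T$ and its character lattice, the subgroup scheme $K$ is precisely the annihilator of $L$. This yields a canonical isomorphism of $\KK$-group schemes
\[
K\;\simeq\;\operatorname{Hom}(\Z^n/L,\GM),
\]
which is a direct instance of Cartier duality between diagonalizable $\KK$-groups and finitely generated abelian groups. This is the main conceptual content of the proof.

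Finally, it only remains to observe that $\operatorname{Hom}(\Z^n/L,\GM)$ is trivial if and only if $\Z^n/L=0$, equivalently $L=\Z^n$. One direction is obvious. For the converse, I would decompose any nonzero finitely generated abelian group as a finite sum of cyclic groups and exhibit a nontrivial map to $\overline{\KK}^{\ast}$ by sending a nontrivial cyclic summand either into a suitable primitive root of unity (torsion case) or to an element of infinite order (free case); this produces a nontrivial $\overline{\KK}$-point of $K$ and contradicts faithfulness. I do not anticipate any real obstacle here; the only point requiring a little care is the identification $K\simeq\operatorname{Hom}(\Z^n/L,\GM)$, but this is standard once the torus/character-lattice dictionary is in place.
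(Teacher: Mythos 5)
Your proof is correct and follows essentially the same route as the paper's: both reduce faithfulness to the triviality of $\bigcap_{B_\alpha\neq 0}\ker(\alpha)$ and then to the characters generating the full character lattice. The paper simply asserts this last equivalence in one line, whereas you justify it via Cartier duality (identifying the kernel with $\operatorname{Hom}(\Z^n/L,\GM)$ and exhibiting a nontrivial geometric point when $L\neq\Z^n$), which is a welcome elaboration rather than a different argument.
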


\begin{proof}
An element $\mathbf{t}\in T$ acts trivially on $B$ if and only if $\mathbf{t}\in\ker(\alpha)$ for every $\alpha\in\Z^n$ such that $B_\alpha\neq 0$. Thus, the action is faithful if and only if $\bigcap_{B_\alpha\neq 0}\ker(\alpha)=0$. And this will occur if and only if these characters generate the whole group of characters of $T$.
\end{proof}

\section*{Declarations}

\textbf{Funding}: The first author was supported by ANID via Proyecto Fondecyt Postdoctorado N\textsuperscript{o}3230406. The second author was partially supported by ANID via Proyecto Fondecyt Regular N\textsuperscript{o}1240001. Both authors were partially supported by ECOS-ANID (Grant N\textsuperscript{o}ECOS230044).\\

{\bf Conflicts of Interest:} The authors declare that they have not conflict of interest.\\

{\bf Data Availability Statement:} Not applicable.

\bibliographystyle{alpha}
\bibliography{ref}

@article{B95,
  author = {Brumatti, Paulo and Simis, Aron},
  title = {The Module of Derivations of a {S}tanley-{R}eisner Ring},
  journal = {Proc. Amer. Math. Soc.},
  volume = {123},
  year = {1995},
  pages = {1309--1318}
}

@article{Can14,
  author = {Cantat, Serge},
  title = {Morphisms between {C}remona groups, and characterization of rational varieties},
  journal = {Compos. Math.},
  volume = {150},
  year = {2014},
  number = {7},
  pages = {1107--1124},
  issn = {0010-437X},
  doi = {10.1112/S0010437X13007835}
}

@article{cantat2019families,
  title = {Families of commuting automorphisms, and a characterization of the affine space},
  author = {Cantat, Serge and Regeta, Andriy and Xie, Junyi},
  journal = {Amer. J. Math.},
  volume = {145},
  number = {2},
  pages = {413--434},
  year = {2023}
}

@book{DCEP82,
  author = {De Concini, Corrado and Eisenbud, David and Procesi, Claudio},
  title = {Hodge algebras},
  series = {Ast{\'e}risque},
  volume = {91},
  year = {1982},
  publisher = {Soci{\'e}t{\'e} Math{\'e}matique de France (SMF), Paris}
}

@article{Dem70,
  title = {Sous-groupes alg{\'e}briques de rang maximum du groupe de Cremona},
  author = {Demazure, Michel},
  journal = {Annales scientifiques de l'{\'E}cole normale sup{\'e}rieure},
  volume = {3},
  number = {4},
  pages = {507--588},
  year = {1970}
}

@article{DL24,
  author = {D\'{\i}az, Roberto and Liendo, Alvaro},
  title = {On the {A}utomorphism {G}roup of {N}on-{N}ecessarily {N}ormal {A}ffine {T}oric {V}arieties},
  journal = {Int. Math. Res. Not. IMRN},
  year = {2024},
  volume = {2},
  pages = {1624--1649},
  issn = {1073-7928},
  doi = {10.1093/imrn/rnad050}
}

@article{DLR23,
  title = {On the characterization of affine toric varieties by their automorphism group},
  author = {D\'{\i}az, Roberto and Liendo, Alvaro and Regeta, Andriy},
  journal = {Isr. J. Math.},
  year = {2026},
  doi = {10.1007/s11856-026-2900-0}
}

@article{DLMR24,
  title = {The Automorphism groups of zero-dimensional monomial algebras},
  author = {D\'{\i}az, Roberto and Liendo, Alvaro and Manzano-Flores, Gonzalo and Regeta, Andriy},
  journal = {arXiv preprint arXiv:2408.02197v2},
  year = {2024}
}

@book{DP70,
  author = {Demazure, Michel and Gabriel, Pierre},
  title = {Groupes alg\'ebriques. {T}ome {I}: {G}\'eom\'etrie alg\'ebrique, g\'en\'eralit\'es, groupes commutatifs},
  publisher = {Masson \& Cie, \'Editeurs, Paris; North-Holland Publishing Co., Amsterdam},
  year = {1970},
  pages = {xxvi+700}
}

@book{E95,
  author = {Eisenbud, David},
  title = {Commutative algebra},
  series = {Graduate Texts in Mathematics},
  volume = {150},
  note = {With a view toward algebraic geometry},
  publisher = {Springer-Verlag, New York},
  year = {1995},
  pages = {xvi+785},
  isbn = {0-387-94268-8},
  doi = {10.1007/978-1-4612-5350-1}
}

@book{Humphreys,
  author = {Humphreys, James E.},
  title = {Linear algebraic groups},
  series = {Graduate Texts in Mathematics},
  volume = {21},
  year = {1981},
  publisher = {Springer, Cham}
}

@article{KLL15,
  author = {Kutzschebauch, Frank and Leuenberger, Matthias and Liendo, Alvaro},
  title = {The algebraic density property for affine toric varieties},
  journal = {J. Pure Appl. Algebra},
  volume = {219},
  year = {2015},
  number = {8},
  pages = {3685--3700},
  issn = {0022-4049},
  doi = {10.1016/j.jpaa.2014.12.017}
}

@article{Kra17,
  author = {Kraft, Hanspeter},
  title = {Automorphism groups of affine varieties and a characterization of affine {$n$}-space},
  journal = {Trans. Moscow Math. Soc.},
  year = {2017},
  volume = {78},
  pages = {171--186},
  issn = {0077-1554},
  doi = {10.1090/mosc/262}
}

@article{kraft2021affine,
  author = {Kraft, Hanspeter and Regeta, Andriy and van Santen, Immanuel},
  title = {Is the affine space determined by its automorphism group?},
  journal = {Int. Math. Res. Not. IMRN},
  year = {2021},
  number = {6},
  pages = {4280--4300},
  issn = {1073-7928},
  doi = {10.1093/imrn/rny281}
}

@article{liendo2010affine,
  author = {Liendo, Alvaro},
  title = {Affine {$\mathbb{T}$}-varieties of complexity one and locally nilpotent derivations},
  journal = {Transform. Groups},
  volume = {15},
  year = {2010},
  number = {2},
  pages = {389--425},
  issn = {1083-4362},
  doi = {10.1007/s00031-010-9089-2}
}

@article{LRU18,
  author = {Liendo, Alvaro and Regeta, Andriy and Urech, Christian},
  title = {Characterization of affine surfaces with a torus action by their automorphism groups},
  journal = {Ann. Sc. Norm. Super. Pisa Cl. Sci. (5)},
  volume = {24},
  year = {2023},
  number = {1},
  pages = {249--289},
  issn = {0391-173X},
  doi = {10.2422/2036-2145.201905\_009}
}

@article{M95,
  author = {M{\"u}ller, Gerd},
  title = {Lie algebras attached to simplicial complexes},
  journal = {J. Algebra},
  volume = {177},
  year = {1995},
  number = {1},
  pages = {132--141},
  issn = {0021-8693},
  doi = {10.1006/jabr.1995.1289}
}

@book{MS05,
  author = {Miller, Ezra and Sturmfels, Bernd},
  title = {Combinatorial commutative algebra},
  series = {Graduate Texts in Mathematics},
  volume = {227},
  publisher = {Springer-Verlag, New York},
  year = {2005},
  pages = {xiv+417},
  isbn = {0-387-22356-8}
}

@article{RUV24,
  title = {Group theoretical characterizations of rationality},
  author = {Regeta, Andriy and Urech, Christian and van Santen, Immanuel},
  journal = {Invent. Math.},
  volume = {241},
  pages = {681--716},
  year = {2025},
  doi = {10.1007/s00222-025-01348-7}
}

@article{RV21b,
  title = {Characterizing smooth affine spherical varieties via the automorphism group},
  author = {Regeta, Andriy and van Santen, Immanuel},
  journal = {Journal de l'{\'E}cole polytechnique—Math{\'e}matiques},
  volume = {8},
  pages = {379--414},
  year = {2021}
}

@article{RV24,
  title = {Maximal commutative unipotent subgroups and a characterization of affine spherical varieties},
  author = {Regeta, Andriy and van Santen, Immanuel},
  journal = {J. Eur. Math. Soc.},
  year = {2025},
  note = {Published online first},
  doi = {10.4171/JEMS/1651}
}

@article{T09,
  author = {Tadesse, Yohannes},
  title = {Derivations preserving a monomial ideal},
  journal = {Proc. Amer. Math. Soc.},
  volume = {137},
  year = {2009},
  number = {9},
  pages = {2935--2942},
  issn = {0002-9939},
  doi = {10.1090/S0002-9939-09-09922-5}
}
\end{document}